\newtheorem{theorem}{Theorem}[section]
\newtheorem{prop}[theorem]{Proposition}
\newtheorem{lemma}[theorem]{Lemma}
\theoremstyle{definition}
\theoremstyle{remark}
\newtheorem{remark}[theorem]{Remark}
\numberwithin{equation}{section}
\newcommand{\abs}[1]{\left\lvert#1\right\rvert}
\newcommand{\ord}{\textup{ord}}
\newcommand{\CC}{\mathcal{C}}
\newcommand{\CH}{\mathcal{H}}
\newcommand{\CX}{\mathcal{X}}
\newcommand{\CY}{\mathcal{Y}}
\begin{document}

\title[On maximal curves]
{On maximal curves which are not Galois subcovers of the Hermitian curve}


\author{Iwan Duursma}
\address{Department of Mathematics \\
University of Illinois at Urbana-Champaign \\
273 Altgeld Hall, MC-382 \\
1409 W. Green Street \\
Urbana, Illinois 61801, USA}
\email{duursma@math.uiuc.edu}

\author{Kit-Ho Mak}
\address{Department of Mathematics \\
University of Illinois at Urbana-Champaign \\
273 Altgeld Hall, MC-382 \\
1409 W. Green Street \\
Urbana, Illinois 61801, USA}
\email{mak4@illinois.edu}

\subjclass[2010]{Primary 11G20; Secondary 14G15, 14H25}
\keywords{maximal curves, generalized GK curves, Galois coverings}

\begin{abstract}
We show that the generalized Giulietti-Korchm{\'a}ros curve 
defined over $\mathbb{F}_{q^{2n}}$, for $n\geq 3$ odd and $q\geq 3$, is not a Galois subcover of the Hermitian curve over $\mathbb{F}_{q^{2n}}$. This answers a question raised by Garcia, G{\"u}neri and Stichtenoth. 
\end{abstract}

\maketitle

\section{Introduction and Statements of Results}

Let $\mathbb{F}_{q^2}$ be the finite field with $q^2$ elements, and let $\CX$ be a projective, nonsingular, geometrically irreducible curve (hereafter referred to as a \textit{curve}) defined over $\mathbb{F}_{q^2}$. We say that $\CX$ is $\mathbb{F}_{q^2}$-maximal if the number of its rational points attains the Hasse-Weil upper bound
\begin{equation*}
\abs{\CX(\mathbb{F}_{q^2})}=q^2+1+2g(\CX)q,
\end{equation*}
where $g(\CX)$ is the genus of $\CX$. The most important example of a maximal curve is the Hermitian curve $\CH$,  which is defined over $\mathbb{F}_{q^2}$ by the equation
\begin{equation*}
y^{q}+y=x^{q+1}.
\end{equation*}
It has genus $\frac{1}{2}q(q-1)$. By the work of \cite{FuTo96,Iha81,StXi95}, the genus of any maximal curve $\CX$ satisfies
\begin{equation*}
g(\CX) \in [0,(q-1)^2/4]\cup\{q(q-1)/2\}.
\end{equation*}
Therefore the Hermitian curve has the largest possible genus that a maximal curve can have. It is shown in \cite{RuSt94} that the Hermitian curve is the unique maximal curve having genus $\frac{1}{2}q(q-1)$. More information about maximal curves can be found in \cite{AbGa04,AbTo99,FGT97,Gar01,GiKo09,KoTo01,KoTo02} and their references.

A curve $\CC$ is called a \textit{subcover} of $\CX$ over $\mathbb{F}_{q^2}$ (or equivalently $\CX$ is a \textit{cover} of $\CC$) if there exists a surjective map $\phi:\CX\longrightarrow\CC$ with $\CC$, $\CX$ and $\phi$ defined over $\mathbb{F}_{q^2}$. By a result known as Serre's theorem (see \cite[Proposition 6]{Lac87}), any subcover of a $\mathbb{F}_{q^2}$-maximal curve is $\mathbb{F}_{q^2}$-maximal. Most of the known maximal curves are subcovers of the Hermitian curve $\CH$, and systematic studies on subcovers of $\CH$ can be found in \cite{CKT99,CKT00,GSX00}.

The first example of a maximal curve which is not a Galois subcover of the Hermitian curve is the curve $y^9-y = z^7$ over $\mathbb{F}_{3^6}$ discovered by Garcia and Stichtenoth \cite{GaSt06}. It is the special case with $q=3$, $n=3$ of the curve $\CX_n$ with defining equation
\begin{equation}\label{Xneqn}
y^{q^2}-y=z^{\frac{q^n+1}{q+1}}
\end{equation}
over $\mathbb{F}_{q^{2n}}$, for $q\geq 2$ and odd $n\geq 3$. The curve $\CX_n$ was shown to be maximal in \cite{ABQ09}. As another example, an unpublished calculation by Rains and Zieve states that the Ree curve of genus $g=15$ over $\mathbb{F}_{3^6}$ is not a Galois subcover of the Hermitian curve over the same field.

In \cite{GiKo09}, Giulietti and Korchm{\'a}ros give an example of a maximal curve, now called the GK curve, that is not covered by the Hermitian curve. The GK curve has been generalized by Garcia, G{\"u}neri and Stichtenoth in \cite{GGS10}. The generalized GK curves $\CC_n$ are maximal curves over $\mathbb{F}_{q^{2n}}$ for a prime power $q$ and odd $n\geq 3$ (\cite{GGS10}, see also \cite{Duu10}). They have defining equations
\begin{equation}\label{GGKeqn}
\begin{aligned}
x^q+x &= y^{q+1} \\
y^{q^2}-y &= z^{\frac{q^n+1}{q+1}}.
\end{aligned}
\end{equation}
It is shown in \cite{GGS10} that the curves with $n=3$ are isomorphic to those given originally by Giulietti and Korchm{\'a}ros. It is not known whether these generalized GK curves $\CC_n$ are covered by the Hermitian curve for $n\geq 5$. In this paper, we give a partial answer to this problem by showing that $\CC_n$ is not a Galois subcover of the Hermitian curve over the same finite field for any odd $n\geq 3$ and $q\geq 3$. More precisely, we prove the following.

\begin{theorem}\label{mainthm}
The generalized GK curve, defined by \eqref{GGKeqn} over $\mathbb{F}_{q^{2n}}$, is not a Galois subcover of the Hermitian curve over $\mathbb{F}_{q^{2n}}$ for any $q\geq 3$ and odd $n\geq 3$.
\end{theorem}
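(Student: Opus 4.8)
The plan is to exploit the fact that the full automorphism group of the Hermitian curve $\CH$ over $\mathbb{F}_{q^{2n}}=\mathbb{F}_{(q^n)^2}$ is $\mathrm{PGU}(3,q^n)$, of order $q^{3n}(q^{2n}-1)(q^{3n}+1)$. A Galois subcover is then exactly a quotient $\CH/G$ for a subgroup $G\le\mathrm{PGU}(3,q^n)$, so the theorem is equivalent to the assertion that no $G$ satisfies $\CH/G\cong\CC_n$. I would begin by recording the two genera. Writing $Q=q^n$, one has $g(\CH)=\tfrac12 Q(Q-1)$, while viewing $\CC_n$ as the Kummer cover $z^{(q^n+1)/(q+1)}=y^{q^2}-y$ of the lower Hermitian curve $x^q+x=y^{q+1}$, which is totally ramified over the $q^3$ zeros of $y^{q^2}-y$ and over the point at infinity, a Riemann--Hurwitz computation gives
\[
g(\CC_n)=\tfrac12(q-1)\bigl(q^{n+1}+q^n-q^2\bigr).
\]

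The first main step is to bound $\abs{G}$. Since each different exponent is nonnegative, Riemann--Hurwitz applied to $\CH\to\CH/G=\CC_n$ forces
\[
\abs{G}\;\le\;\frac{2g(\CH)-2}{2g(\CC_n)-2}\;=\;\frac{(q^n-2)(q^n+1)}{q^{n+2}-q^n-q^3+q^2-2},
\]
whose right-hand side is asymptotic to $q^{n-2}$ and is already smaller than $4$ when $q=n=3$. The purpose of this bound is that $G$ is a comparatively small subgroup of $\mathrm{PGU}(3,q^n)$, so the classification of subgroups of $\mathrm{PGU}(3,q^n)$ (Mitchell, Hartley) leaves only a short list of structural types: cyclic and metacyclic groups sitting inside the maximal tori, whose orders divide one of $q^n-1$, $q^n+1$, $q^{2n}-1$ or $q^{2n}-q^n+1$; $p$-groups and their extensions; and a handful of ``exotic'' groups ($A_4$, $S_4$, $A_5$, $\mathrm{PSL}(2,7)$, etc.) of bounded order, which the size bound eliminates for large $n$ and which can be checked directly in the finitely many small cases.

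The second main step is to compute $g(\CH/G)$ for each admissible type and show it never equals $g(\CC_n)$. Here I would use the standard tables of fixed points of an element of $\mathrm{PGU}(3,q^n)$ on $\CH$ according to its order: a prime-to-$p$ element fixes $q^n+1$, $3$, $2$, or $0$ points of $\CH(\overline{\mathbb{F}}_{q^{2n}})$ depending on which of the torus orders divides its order, while a $p$-element fixes exactly one point. Feeding these into Riemann--Hurwitz yields closed-form expressions for $g(\CH/G)$ in terms of $\abs{G}$ and the orbit data, and the task reduces to showing that $g(\CH/G)=g(\CC_n)$ has no admissible solution subject to the divisibility constraints and the bound above. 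The fixed-point-free (étale) case is particularly clean: there $\abs{G}$ must equal the ratio displayed above, which is generically not even an integer, so it is ruled out outright. The tame cyclic cases reduce to exact genus identities of the shape $\tfrac{(Q+1)(Q-1-\abs{G})}{2\abs{G}}+1=g(\CC_n)$, which one checks have no solution with $\abs{G}$ dividing the relevant torus order, and the very small base cases (e.g.\ $q=n=3$, where $\abs{G}\le 3$) are disposed of by hand.

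I expect the main obstacle to be the wild case, where $p\mid\abs{G}$. There the different exponent at the unique fixed point is not governed by the ramification index alone but by the higher ramification filtration of the unipotent and mixed unipotent--semisimple subgroups of the Sylow $p$-subgroup, so $g(\CH/G)$ must be computed through the ramification jumps rather than by a single tame formula. Establishing these genus formulas for the $p$-power and mixed quotients, and verifying uniformly in $q\ge 3$ and odd $n\ge 3$ that none of them reproduces $g(\CC_n)$, is the technical heart of the argument; once the bound on $\abs{G}$ is in place, the tame and purely arithmetic parts are comparatively routine.
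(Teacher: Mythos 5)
Your proposal takes the classical route---bound the degree by Riemann--Hurwitz, then classify the admissible subgroups of $PGU(3,q^n)$ and compute the genus of each quotient---which is precisely the approach the paper set out to avoid, and as written it has a genuine gap that would make it fail. Your genus computation for $\CC_n$ and your upper bound $d\leq (q^n-2)(q^n+1)/(q^{n+2}-q^n-q^3+q^2-2)$ are both correct, but that bound is asymptotic to $q^{n-2}$ and hence \emph{grows} with $n$; it does not confine $G$ to ``cyclic and metacyclic groups in tori, $p$-groups and their extensions, and exotic groups of bounded order.'' For example, $SU(2,q^n)\cong SL(2,q^n)$ sits inside $SU(3,q^n)$, so $PGU(3,q^n)$ contains a copy of $SL(2,q)$ of order $q(q^2-1)$, which fits under your bound already for $n\geq 5$; for larger $n$ the subfield subgroups $PSL(2,q_0)$, $PGL(2,q_0)$, and even $PGU(3,q_0)$ fall under the bound as well, together with a vast assortment of subgroups of the Borel (of the form $p$-group-by-cyclic with varying higher ramification). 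So the Mitchell--Hartley reduction does not leave a ``short list,'' and the list you would actually have to process grows with $n$. On top of this, the case you yourself identify as the technical heart---quotients by groups of order divisible by $p$, where the genus depends on the full higher ramification filtration---is exactly where most of the admissible subgroups live, and your sketch gives no argument, uniform in $q$ and $n$, that none of those quotients has genus $g(\CC_n)$. In effect the proposal is a program whose completion would amount to recomputing a large part of the genus spectrum of Hermitian quotients, not a proof.

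For contrast, the paper sidesteps all subgroup classification. Using the Hilbert different formula, it writes $\deg R=\sum_{\sigma\neq 1}i(\sigma)$, where $i(\sigma)$ is (minus) the Artin character, and shows that for every $\sigma\in PGU(3,q^n)$ one has $i(\sigma)\in\{0,1,2,3,q^n+1,q^n+2\}$---your fixed-point table, but recorded with the wild contributions included, so that each element's contribution is either very small or very large. Splitting the nontrivial elements of an arbitrary Galois group $G$ according to this dichotomy and comparing the resulting two-sided estimate for $\deg R$ with the arithmetic expression $\deg R=(2g(\CH_n)-2)-d(2g(\CC_n)-2)$ yields a sharpened lower bound (Proposition \ref{proplb}): with $2g(\CC_n)-2=(q^2-1)(q^n+1)-(q^3+1)$ one gets $d\geq (q^n+1)/(q^2-q+1)$, which strictly exceeds the Hurwitz upper bound $(q^n-2)/(q^2-2)$ for all $q\geq 3$ and odd $n\geq 3$. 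The contradiction is immediate and requires no knowledge of which subgroups of $PGU(3,q^n)$ actually occur---only of the possible Artin character values, which is exactly the information your element-wise fixed-point table contains but which your outline never aggregates into a group-independent obstruction.
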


For the proof we make use of the Artin character for the Hermitian function field. This appears to be the first time that the Artin
character is used to study subcovers of the Hermitian function field and it allows us to avoid dealing with explicit equations and
group structures for Galois subcovers. The proof depends on Proposition \ref{proplb} which may be of independent interest and which gives a new lower bound
for the degree of a Galois covering $\phi: \CX\longrightarrow\CC$ of a maximal curve $\CC$ by a Hermitian curve $\CX$.

For $q=2$, the situation is different. For the case $n=3$, Giulietti and Korchm{\'a}ros show that the GK curve over $\mathbb{F}_{64}$ is covered by the Hermitian curve over the same field \cite{GiKo09}. We prove that if the generalized GK curve $\CC_n$ over $\mathbb{F}_{2^{2n}}$ is Galois covered by the Hermitian curve over $\mathbb{F}_{2^{2n}}$, then there is exactly one possibility for the degree and ramification structure.

\begin{theorem}\label{mainthmq=2}
Let $n\geq 5$ be an odd integer. If the generalized GK curve, defined by \eqref{GGKeqn} with $q=2$ over $\mathbb{F}_{2^{2n}}$, is Galois covered by the Hermitian curve over the same finite field, then the degree of the covering is $d=(2^n+1)/3$, and the covering is unramified.
\end{theorem}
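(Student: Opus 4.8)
The plan is to run the degree-bounding argument of Theorem \ref{mainthm}, but to observe that at $q = 2$ the resulting lower and upper bounds for the degree coincide, so that instead of a contradiction one obtains an exact value. Write $Q = 2^n$, so the ambient field is $\mathbb{F}_{Q^2} = \mathbb{F}_{2^{2n}}$ and the covering curve is the Hermitian curve $\CH$ of genus $g(\CH) = \tfrac12 Q(Q-1) = 2^{n-1}(2^n-1)$. By \cite{GGS10} (equivalently, by Riemann--Hurwitz applied to the tame degree-$(2^n+1)/3$ cover $\CC_n \to \{x^2+x = y^3\}$, $(x,y,z)\mapsto(x,y)$, which is totally ramified over all $2^3+1$ rational points) the base curve has genus $g(\CC_n) = 3\cdot 2^{n-1} - 2$.

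First I would extract the Riemann--Hurwitz upper bound. If $\phi\colon \CH \to \CC_n$ is a Galois cover of degree $d$ over $\mathbb{F}_{2^{2n}}$ with different $\mathfrak D$, then $2 g(\CH) - 2 = d\,(2 g(\CC_n) - 2) + \deg \mathfrak D$; substituting the two genera and factoring gives
$$\deg \mathfrak D = (2^n - 2)\bigl((2^n + 1) - 3 d\bigr).$$
As $\deg \mathfrak D \ge 0$ and $2^n - 2 > 0$, this forces $d \le (2^n+1)/3$, with equality if and only if $\deg \mathfrak D = 0$, i.e.\ if and only if $\phi$ is unramified. (Here $(2^n+1)/3$ is an integer precisely because $n$ is odd.)

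Next I would invoke Proposition \ref{proplb}, which provides a lower bound for the degree of any Galois covering of a maximal curve by a Hermitian curve. Specialized to $\CC_n$ at $q = 2$, this lower bound reads $d \ge (2^n+1)/3$. (For $q \ge 3$ the very same lower bound strictly exceeds the Riemann--Hurwitz upper bound, which is the contradiction that proves Theorem \ref{mainthm}; $q = 2$ is exactly the boundary case where the two bounds agree.) Combining $d \ge (2^n+1)/3$ with $d \le (2^n+1)/3$ gives $d = (2^n+1)/3$, and then the displayed identity forces $\deg \mathfrak D = 0$, so $\phi$ is unramified. The degree is consistent with the ambient group, since $(2^n+1)/3$ divides $2^{3n}+1 = (2^n+1)(2^{2n}-2^n+1)$ and hence divides $\abs{\mathrm{PGU}(3,2^n)}$.

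The crux, and the main obstacle, is the lower-bound input: one must verify that the Artin-character estimate underlying Proposition \ref{proplb} specializes to exactly $(2^n+1)/3$ at $q=2$ and is sharp there. Concretely this means bounding $\deg \mathfrak D$ from above in terms of the fixed-point data of the conjugacy classes of $\mathrm{PGU}(3,2^n) = \mathrm{Aut}(\CH)$---the data encoded by the Artin character---and checking that feeding this bound back into Riemann--Hurwitz yields a lower bound on $d$ meeting $(2^n+1)/3$ with no gap. I expect the hypothesis $n \ge 5$ to be used precisely here, to make the subgroup and ramification estimates behave uniformly; the excluded case $n = 3$ is genuinely exceptional, the GK curve over $\mathbb{F}_{64}$ being Hermitian-covered of degree $(2^3+1)/3 = 3$ (unramified, in agreement with the statement).
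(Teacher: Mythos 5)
Your proposal is correct and takes essentially the same route as the paper: the paper likewise combines the lower bound of Proposition \ref{proplb} (with $A=3$, $B=9$, $k=2$, giving $d \geq (2^n+1)/3$) with the Riemann--Hurwitz upper bound $d \leq (2g(\CH_n)-2)/(2g(\CC_n)-2) = (2^n+1)/3$, so the two bounds meet, forcing $d=(2^n+1)/3$ and $\deg R = 0$. One minor correction: your guess that $n\geq 5$ is needed for the lower-bound estimates is off --- the argument works verbatim for $n=3$; that case is excluded from the statement only because Giulietti and Korchm\'aros already showed the GK curve over $\mathbb{F}_{64}$ is (Galois) covered by the Hermitian curve, consistently with the conclusion.
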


Next, we consider the curve $\CX_n$ defined over $\mathbb{F}_{q^{2n}}$ by \eqref{Xneqn}
which is the second equation in the definition of the generalized GK curve. For $q=2$ it is shown in \cite{ABQ09} that $\CX_n$ is covered by the Hermitian curve, but it is not known whether this curve is a subcover of the Hermitian curve for $q\geq 3$. For $q=3$, Garcia and Stichtenoth \cite{GaSt06} showed that the curve is not a
Galois subcover of the Hermitian curve. Unlike the case for the generalized GK curve, our new lower bound does not eliminate all the possible degrees of a Galois covering. Nevertheless, we are able to shorten the interval of possible degrees that is obtained by the traditional method (which will be outlined in Section \ref{secsubH}).

\begin{theorem}\label{thmXn}
Let $n\geq 3$ be an odd integer. If the curve $\CX_n$ defined by \eqref{Xneqn} with $q>2$ is Galois covered by the Hermitian curve over the same finite field, then the degree of the covering $d$ satisfies
\begin{equation*}
\frac{(q+1)(q^n+1)}{q^2+1} \leq d \leq q^{n-1}+q^{n-2}+\ldots+q^2+q+2.
\end{equation*}
\end{theorem}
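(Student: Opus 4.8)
The plan is to bracket the degree $d$ of a hypothetical Galois covering $\phi\colon\CH\to\CX_n$, where $\CH$ is the Hermitian curve over $\mathbb{F}_{q^{2n}}=\mathbb{F}_{(q^n)^2}$, with $g(\CH)=\tfrac12 q^n(q^n-1)$ and $q^{3n}+1$ rational points. First I would record the genus of $\CX_n$. Viewing \eqref{Xneqn} as a degree-$q^2$ elementary abelian (generalized Artin--Schreier) cover $\CX_n\to\mathbb{P}^1$ through the additive map $y\mapsto y^{q^2}-y$, the only ramification is over $z=\infty$, where $z^{m}$ with $m=(q^n+1)/(q+1)$ has a pole of order $m$. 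Since $m\equiv 1\pmod p$, this order is prime to $p$, the place is totally ramified, and its different exponent is $(q^2-1)(m+1)$. Riemann--Hurwitz gives $2g(\CX_n)-2=-2q^2+(q^2-1)(m+1)$, so $g(\CX_n)=\tfrac12(q^2-1)(m-1)=\tfrac12 q(q-1)(q^{n-1}-1)$, using $m-1=q(q^{n-1}-1)/(q+1)$.

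For the lower bound I would apply Proposition \ref{proplb} to $\phi\colon\CH\to\CX_n$. Substituting the genus $g(\CX_n)=\tfrac12 q(q-1)(q^{n-1}-1)$ just computed, together with the rational-point and ramification data of $\CX_n$, into the estimate of that proposition and simplifying, the bound should collapse to $d\ge (q+1)(q^n+1)/(q^2+1)$. The conceptual content is already carried by Proposition \ref{proplb}; here the task is to perform the substitution and the algebraic reduction and confirm that nothing weaker survives.

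For the upper bound I would use Riemann--Hurwitz, improving on the trivial estimate $\deg\mathfrak{D}\ge0$ exactly as in the traditional method (to be outlined in Section \ref{secsubH}). The point is that $\phi$ cannot be unramified: one checks that $(g(\CH)-1)/(g(\CX_n)-1)$ is not an integer, and, more usefully, the distinguished rational place at infinity of $\CX_n$ lies below a totally ramified place of $\CH$, so that $\deg\mathfrak{D}\ge d-1$. Feeding this into $2g(\CH)-2=d(2g(\CX_n)-2)+\deg\mathfrak{D}$ yields
\[
d\le\frac{2g(\CH)-1}{2g(\CX_n)-1}=\frac{q^{2n}-q^n-1}{q^{n+1}-q^n-q^2+q-1}.
\]
A direct comparison shows the right-hand side lies strictly between $U$ and $U+1$, where $U=q^{n-1}+q^{n-2}+\cdots+q+2=\frac{q^n-1}{q-1}+1$; since $d$ is an integer this gives $d\le U$, the claimed upper bound. (The comparison reduces, after clearing $q-1$, to verifying that $q^{n+1}+q^3-3q^2+2q-1>0$ and $q^{n+2}-3q^{n+1}+q^n-2q^3+5q^2-4q+2>0$ for $q\ge3$ and odd $n\ge3$, both of which are routine.)

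The main obstacle I anticipate is the lower bound: verifying that Proposition \ref{proplb}, once specialized to $\CX_n$, really simplifies to $(q+1)(q^n+1)/(q^2+1)$ rather than to a weaker quantity is sensitive to precisely which invariants of $\CX_n$ enter the proposition, and these must be matched with care. A secondary point is the justification, for the upper bound, that the place at infinity is genuinely totally ramified in $\phi$, i.e.\ that the distinguished points of $\CH$ and of $\CX_n$ correspond under the covering; once this is in place the inequality $\deg\mathfrak{D}\ge d-1$ is safe, since any wild ramification there only increases $\deg\mathfrak{D}$ and strengthens the bound.
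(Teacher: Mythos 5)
Your genus computation is correct, and your lower bound is exactly the paper's argument: writing $2g(\CX_n)-2=(q-1)(q^n+1)-(q^2+1)$ gives $A=q-1$, $B=q^2+1$, $k=q$ (maximal with $kq<q^2+1$), the hypotheses $B>A+2$ and $k(A+1)<B$ of Proposition \ref{proplb} hold, and the proposition yields $dB\ge (k+1)(q^n+1)$, i.e. $d\ge (q+1)(q^n+1)/(q^2+1)$. So that half is fine, modulo routine substitution.

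The upper bound is where you have a genuine gap. The paper gets its upper bound from the plain Hurwitz bound \eqref{eq:bounds}, i.e. from $\deg R\ge 0$, with no hypothesis whatsoever on the ramification. You instead want $\deg R\ge d-1$, and you get it from the assertion that the rational place at infinity of $\CX_n$ lies below a totally ramified place of $\CH_n$. You call the justification of this assertion ``a secondary point,'' but it is the load-bearing step of your argument, and nothing in your sketch supports it. For a Galois covering, total ramification over a rational place means precisely that the Galois group $G\le PGU(3,q^n)$ fixes a point of $\CH_n$, i.e.\ that $G$ is contained in a one-point stabilizer $H$. There is no a priori reason for this: subgroups of $PGU(3,q^n)$ need not fix any rational point (for instance, cyclic subgroups of order dividing $q^{2n}-q^n+1$ fix only a degree-three place), and proving that a quotient isomorphic to $\CX_n$ forces $G\le H$ is essentially a question of the same difficulty as the theorem itself --- indeed, if one knew $G\le H$, one could instead appeal to the classification of quotients of $\CH_n$ by subgroups of $H$ in \cite{GSX00}. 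Your fallback observation --- that the covering cannot be unramified because $(2g(\CH_n)-2)/(2g(\CX_n)-2)$ is not an integer --- only gives $\deg R\ge 1$, hence $d\le (2g(\CH_n)-3)/(2g(\CX_n)-2)$, which is not enough.

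A comparison worth recording: the floor of the plain Hurwitz quotient $(2g(\CH_n)-2)/(2g(\CX_n)-2)$ equals the stated bound $q^{n-1}+\cdots+q+2$ for $q\ge 4$, but for $q=3$ it equals $q^{n-1}+\cdots+q+3$, one larger. So for $q\ge 4$ your refined inequality is unnecessary (the paper's citation of \eqref{eq:bounds} suffices), while for $q=3$ --- the one case where a refinement of the kind you propose would genuinely be needed to reach the stated bound --- your proof of that refinement is missing. In short: lower bound correct and identical to the paper; upper bound rests on an unproved and non-obvious total-ramification claim, so the argument as written does not close.
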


We remark that for $n\geq 5$, we do not know whether the curve $\CC_n$ is non-Galois covered by the Hermitian curve or not.

\section{Subcovers of the Hermitian curve} \label{secsubH}

Let $\CH_n$ be the Hermitian curve of degree $q^n+1$ over $\mathbb{F}_{q^{2n}}$. It has genus $g(\CH_n) = \frac{1}{2}q^n(q^n-1)$ and number of $\mathbb{F}_{q^{2n}}$-rational points $N(\CH_n) = q^{3n}+1.$ Let $\CY_n$ be a subcover of the Hermitian curve with morphism $\phi : \CH_n \longrightarrow \CY_n$ of degree $d$. From the splitting of points
we obtain a lower bound for $d$, and from the Hurwitz genus formula (see \cite[Theorem 3.4.13]{Sti09}) an upper bound for $d$,
\begin{equation} \label{eq:bounds}
\frac{N(\CH_n)}{N(\CY_n)} \leq d \leq \frac{2g(\CH_n)-2}{2g(\CY_n)-2}.
\end{equation}
Subcovers of the Hermitian curve are again maximal, and thus
\[
N(\CY_n) = q^{2n}+1+2g(\CY_n)q^n = (q^n+1)^2 + (2g(\CY_n)-2)q^n.
\]
The Hermitian curve has $2g(\CH_n)-2 = (q^n-2)(q^n+1)$. For $(A-1)(q^n+1) \leq 2g(\CY_n)-2 < A(q^n+1)$,
the bounds \eqref{eq:bounds} yield
\begin{equation*}
\frac{q^n}{A+1} \leq d \leq \frac{q^n-2}{A-1}.
\end{equation*}
The lower bound holds with equality for a covering $\phi : \CH_n \longrightarrow \CY_n$ of degree $d$ with
\begin{equation} \label{sharp}
2g(\CY_n)-2 = (q^n/d-1)(q^n+1)-(q^n/d+1)  \qquad \text{for $d|q^n$.}
\end{equation}
Such a covering exists for every divisor $d$ of $q^n$ (\cite[Section 3]{GSX00}).
For other cases we will use the following refinement of the lower bound.

\begin{lemma}\label{lemcov}
Let $\CY_n$ be a maximal curve with $2g(\CY_n)-2 = A(q^n+1) - B$, for integers $A$ and $B$ with $1 \leq B \leq q^n+1$. For $k(A+1) < B$, and for
$B \neq A+2$,
\[
\frac{q^n+k}{A+1} \leq d.
\]
In particular, $d(A+1) \geq q^n+1$ for $B > A+2.$
\end{lemma}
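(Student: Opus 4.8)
The plan is to reduce the asserted bound to the positivity of a single quadratic polynomial in $q^n$ and then verify that positivity by a short case analysis; the only geometric input is the splitting of rational points in a cover. Throughout write $Q=q^n$. In a degree-$d$ cover $\phi:\CH_n\longrightarrow\CY_n$ each rational point of $\CY_n$ has at most $d$ rational points of $\CH_n$ above it, so $N(\CH_n)\le d\,N(\CY_n)$. Here $N(\CH_n)=Q^3+1$, while the maximality of $\CY_n$ together with $2g(\CY_n)-2=A(Q+1)-B$ gives
\[
N(\CY_n)=(Q+1)^2+(2g(\CY_n)-2)Q=(A+1)Q^2+(A+2-B)Q+1.
\]
The key observation is that $d(A+1)$ is an integer, so to prove $d(A+1)\ge Q+k$ (equivalently $d\ge (Q+k)/(A+1)$) it suffices to establish the strict inequality $(A+1)(Q^3+1)>(Q+k-1)N(\CY_n)$: since $N(\CH_n)\le d\,N(\CY_n)$ this yields $(A+1)d\,N(\CY_n)\ge(A+1)N(\CH_n)>(Q+k-1)N(\CY_n)$, hence $d(A+1)>Q+k-1$, and integrality upgrades this to $d(A+1)\ge Q+k$.

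Next I would make the inequality explicit. Setting $t:=B-k(A+1)-1$ and expanding $F:=(A+1)(Q^3+1)-(Q+k-1)N(\CY_n)$, the cubic terms cancel and one finds
\[
F=t\,Q^2+\bigl[(k-1)t+(k-1)^2(A+1)-1\bigr]Q+(A+2-k).
\]
The hypothesis $k(A+1)<B$ is exactly $t\ge0$, so the leading coefficient is nonnegative, and the whole problem becomes showing $F>0$ under the two hypotheses $t\ge0$ and $B\ne A+2$.

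Finally I would split on $k$. When $A=0$ the bound $g\ge0$ forces $B\le2$, which together with $B\ne A+2$ leaves only configurations already covered by $k=0$, so I may assume $A\ge1$ as soon as $k\ge1$. For $k=0$ and for $k\ge2$ the linear coefficient $(k-1)^2(A+1)-1$ is at least $A\ge0$, and using $k(A+1)\le Q$ (from $B\le Q+1$) to control the possibly negative constant $A+2-k$ makes $F>0$ immediate. The delicate case is $k=1$, where the formula collapses to $F=t\,Q^2-Q+(A+1)$; its linear-plus-constant part is negative for large $Q$, so the estimate can only succeed when the leading coefficient does not vanish. This is precisely where $B\ne A+2$ is used: it forbids $t=0$, so $t\ge1$ and hence $F\ge Q^2-Q+(A+1)>0$. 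The main obstacle is exactly this boundary bookkeeping at $t=0$—when the leading coefficient vanishes the quadratic estimate degenerates, and one must check that the hypotheses $k(A+1)<B$ and $B\ne A+2$ together keep $F$ strictly positive over the full range $1\le B\le Q+1$. The ``in particular'' claim is the instance $k=1$, since $B>A+2$ supplies at once $k(A+1)<B$ and $B\ne A+2$, and therefore $d(A+1)\ge Q+1$.
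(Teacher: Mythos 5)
Your proof is correct and follows essentially the same route as the paper: both reduce the lemma, via the point-count inequality $N(\CH_n)\le d\,N(\CY_n)$ and the integrality of $d(A+1)$, to the positivity of the same polynomial (the paper's $(kq^n-1)(k-2)+A+A(k-1)^2q^n$ is exactly your $F$ at $t=0$), with the hypothesis $B\neq A+2$ entering only in the case $k=1$. If anything your write-up is slightly more careful than the paper's: you verify $F>0$ for all admissible $B$ rather than only the extremal case $B=k(A+1)+1$, and your explicit $A=0$ remark disposes of the corner case $k=2$, $A=0$, where the paper's displayed polynomial is only $\geq 0$ (a case which is in fact vacuous, as your remark shows).
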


\begin{proof}
For the relevant case $B=k(A+1)+1$, the inequality $N(\CH_n)(A+1) > N(\CX_n)(q^n-1+k)$ reduces to
$\left( k q^n-1 \right)  \left( k-2 \right)+A+A \left( k-1 \right) ^{2}q^n > 0$, which holds for $k \geq 2$ and for $k=0$ .
For $k=1$, we need to verify only the case $B=A+3$. For $B=A+3$, the inequality
$N(\CH_n)(A+1) > N(\CX_n)(q^n)$ reduces to $q^{2n}-q^n+A+1 > 0$.
\end{proof}

Let $k$ be maximal with $k(A+1) < B.$ For the degree of the ramification divisor $R$ we write
\begin{equation} \label{eq:degRAB}
\deg R = (2g(\CH_n)-2)- d (2g(\CY_n)-2) = R_0 (q^n+1) + R_1,
\end{equation}
where $R_0 = (q^n-2-dA+k)$ and $R_1 = dB-k(q^n+1).$
For Galois subcovers,
we write the degree of the ramification divisor in a different way in the next two sections.
In Proposition \ref{proplb} we show that the combined descriptions exclude the possibility
$R_1 < q^n+1$, which yields a lower bound $d \geq (k+1)(q^n+1)/B$ that in many cases improves
the standard lower bound \eqref{eq:bounds}.

\section{The degree of the ramification divisor of a Galois covering} \label{secprelim} 

Now we suppose that the covering $\phi:\CH_n\longrightarrow\CY_n$ is Galois with Galois group $G$, with $\abs{G}=d$. Then $G$ can be realized as a subgroup of $\text{Aut}(\CH_n)=PGU(3,q^n)$ (see \cite{Leo96,Sti73}), and $\CY_n$ is the quotient curve of $\CH_n$ by $G$. To understand the ramification in a Galois covering, we will use the Hilbert different formula (see the proof in \cite[Theorem 3.8.7]{Sti09}), which we state here for the sake of completeness. Let $\CX\longrightarrow\CX'$ be a Galois covering of curves with Galois group $G$, and let $P$ and $P'$ be points on $\CX$ and $\CX'$ respectively (which need not lie in the field of definition of the covering) so that $P$ maps to $P'$ under the covering. Then the different exponent $d(P|P')$ is
\begin{equation}\label{eqndiff}
d(P|P')=\sum_{\substack{1\neq\sigma\in G \\ \sigma(P)=P}} i_{P}(\sigma),
\end{equation}
where $i_{P}(\sigma)=v_{P}(\sigma(t)-t)$ with $t$ a local uniformizer at $P$. Note that if the ramification of $P$ over $P'$ is tame, then $i_{P}(\sigma)=1$ for any $\sigma$ that fixes $P$, and in that case $d(P|P')$ is the number of elements $\sigma\neq 1$ in $G$ that fix $P$. For any $\sigma\in PGU(3,q^n)$, define
\begin{equation}\label{defisigma}
i(\sigma):=\sum_{P\in\CX} i_P(\sigma)\deg P,
\end{equation}
with the convention that $i_P(\sigma)=0$ if $\sigma(P)\neq P$. Combining \eqref{eqndiff} with the Hurwitz genus formula, we get the following proposition which we will rely on heavily.

\begin{prop}\label{propram}
Suppose $\CX\longrightarrow\CX'$ is a Galois covering of degree $d$ with Galois group $G$, then
\begin{equation*}
2g(\CX)-2=d(2g(\CX')-2)+\deg R,
\end{equation*}
where $R$ is the ramification divisor, whose degree is given by
\begin{equation*}
\deg R = \sum_{1\neq\sigma\in G} i(\sigma).
\end{equation*}
\end{prop}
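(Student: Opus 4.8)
The plan is to combine the Hurwitz genus formula with the Hilbert different formula \eqref{eqndiff}; the only genuine work is a rearrangement of a double sum. The first displayed identity, $2g(\CX)-2 = d(2g(\CX')-2) + \deg R$, is exactly the Hurwitz genus formula for the Galois covering $\CX \longrightarrow \CX'$ of degree $d$ (\cite[Theorem 3.4.13]{Sti09}); here the ramification divisor is $R = \sum_P d(P|P')\,P$, the sum running over the places $P$ of $\CX$ with $P'$ denoting the place of $\CX'$ below $P$. So the substance of the proposition lies entirely in re-expressing $\deg R$ in the group-theoretic form $\sum_{1\neq\sigma\in G} i(\sigma)$.

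To do this, I would begin from $\deg R = \sum_P d(P|P')\deg P$ and substitute the Hilbert different formula \eqref{eqndiff} (\cite[Theorem 3.8.7]{Sti09}), namely $d(P|P') = \sum_{1\neq\sigma\in G,\ \sigma(P)=P} i_P(\sigma)$, producing a double sum indexed by places $P$ and by nontrivial elements $\sigma$ fixing $P$. Interchanging the order of summation reorganizes the terms by $\sigma$ rather than by $P$: for each fixed $\sigma\neq 1$ the inner sum becomes $\sum_{\sigma(P)=P} i_P(\sigma)\deg P$. By the convention accompanying \eqref{defisigma}, that $i_P(\sigma)=0$ whenever $\sigma(P)\neq P$, the extra terms with $\sigma(P)\neq P$ contribute nothing, so this inner sum extends harmlessly to all places of $\CX$ and is precisely $i(\sigma)$ as defined in \eqref{defisigma}. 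Summing over $1\neq\sigma\in G$ then gives $\deg R = \sum_{1\neq\sigma\in G} i(\sigma)$, as asserted.

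The step to handle with care is the interchange of summation together with the degree weighting $\deg P$: because the places of $\CX$ need not be rational, each term carries a factor $\deg P$, and I must check that the identical weighting appears on both sides, i.e.\ in $\deg R$ and in the definition \eqref{defisigma}. One should also note that there is no convergence issue, since $i_P(\sigma)$ vanishes off the finitely many places fixed by some nontrivial element of $G$, that is, off the ramification locus, so both orderings of the sum are finite and the interchange is legitimate. Beyond this bookkeeping no further input is required; the proposition is essentially an identity that transfers the ramification contribution from a sum over points to a sum over the nontrivial elements of the Galois group, which is exactly the shape needed to bring in the Artin-character computations of the subsequent sections.
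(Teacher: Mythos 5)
Your proposal is correct and matches the paper's own (implicit) argument exactly: the paper derives Proposition \ref{propram} precisely by combining the Hurwitz genus formula with the Hilbert different formula \eqref{eqndiff} and regrouping the resulting double sum by group element, using the convention $i_P(\sigma)=0$ for $\sigma(P)\neq P$ from \eqref{defisigma}. Your careful attention to the $\deg P$ weighting and the finiteness of the sums is sound bookkeeping, fully consistent with what the paper does.
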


\begin{remark}
We want to point out the relation between $i(\sigma)$ and the Artin representation (see \cite[Chapter 19]{Ser77}, \cite[Chapter VI]{Ser79}).
The character of the Artin representation satisfies $a(\sigma) = -i(\sigma)$, for $\sigma \neq 1$, and
$\sum_\sigma a(\sigma) = 0$. For the subcover of the Hermitian function field with group $PGU(3,q^n)$,
the Artin representation is the unique irreducible representation of minimal degree $2g(\CH_n)=q^n(q^n-1)$ (see \cite[Lemma 4.1]{LaSe74}).
\end{remark}

\section{Artin character of the Hermitian function field} \label{secartin}

To apply Proposition \ref{propram} with $\CX=\CH_n$ and $\CX'=\CY_n$, we need to understand the values of $i(\sigma)$ defined by \eqref{defisigma} for $\sigma\in PGU(3,q^n)$. The action of $PGU(3,q^n)$ on the Hermitian curve $\CH_n$ is well-known \cite{Sti73}. An element in $PGU(3,q^n)$ either fixes no points on $\CH_n$, or it fixes a point of degree one, or fixes a point of degree three. If $\sigma$ fixes no points on $\CH_n$, then $i(\sigma)=0$. If it fixes a point of degree three, then it fixes only that point. Since any such $\sigma$ has order dividing $q^{2n}-q^n+1$, which is relatively prime to $q$, the ramification is tame. Hence $i(\sigma)=3$. The case when $\sigma$ fixes a point of degree one has several subcases. Since the action of $PGU(3,q^n)$ on the points of degree one on $\CH_n$ is transitive (see for example \cite{HuPi73}), and $i(\sigma)$ is unchanged under conjugation
(see for example \cite[Chapter IV]{Ser79}), we may assume that the degree one point fixed is the point at infinity $P_{\infty}$ when $\CH_n$ is given by the equation $x^{q^n}+x=y^{q^n+1}$. Let $H$ be the subgroup of $PGU(3,q^n)$ fixing $P_{\infty}$. One can show that $H$ is of order $q^{3n}(q^{2n}-1)$, and any $\sigma\in H$ is of the form
\begin{align}\label{eqnsigma}
\sigma(x)&=a^{q^n+1}x+ab^{q^n}y+c, & \sigma(y)&=ay+b,
\end{align}
with $a\in \mathbb{F}_{q^{2n}}\backslash\{0\}$, $b\in\mathbb{F}_{q^{2n}}$, $c^{q^n}+c=b^{q^n+1}$. Following the notations in \cite{GSX00}, we denote by $\sigma=[a,b,c]$ the automorphism $\sigma\in H$ given by \eqref{eqnsigma}. There are $2$ cases.

\begin{lemma} 
Let $P_{\infty}$ be the point at infinity when $\CH_n$ is given by the equation $x^{q^n}+x=y^{q^n+1}$, and let $H$ be the subgroup of $PGU(3,q^n)$ fixing $P_{\infty}$. Let $\sigma=[a,b,c]\in H$ with $\sigma\neq 1$. For $a\neq 1$, we have
\begin{equation*}
i(\sigma)=\begin{cases}
1 &, \text{~if~} p \text{~divides~} \ord(\sigma), \\
q^n+1 &, \text{~if~} \ord(\sigma) \text{~divides~} q^n+1, \\
2 &, \text{~otherwise}.
\end{cases}
\end{equation*}
For $a=1$, we have
\begin{equation*}
i(\sigma)=\begin{cases}
2 &, \text{~if~} a=1,b\neq 0, \\
q^n+2 &, \text{~if~} a=1,b=0,c\neq 0.
\end{cases}
\end{equation*}

\end{lemma}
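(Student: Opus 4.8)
The plan is to compute $i(\sigma)=\sum_P i_P(\sigma)\deg P$ by locating the fixed points of $\sigma=[a,b,c]$ on $\CH_n$ and evaluating $i_P(\sigma)=v_P(\sigma t-t)$ at each. Projectively on $[x:y:z]$ the map $\sigma$ is the upper-triangular matrix $M$ with first row $(a^{q^n+1},ab^{q^n},c)$ and second row $(0,a,b)$, so its fixed points in $\mathbb{P}^2$ are the eigenlines of $M$, with eigenvalues $a^{q^n+1},a,1$. Two geometric facts about $\CH_n$ (given by $x^{q^n}z+xz^{q^n}=y^{q^n+1}$) are used throughout: the only point at infinity is $P_\infty=[1:0:0]$, whose tangent line is $z=0$; and every line meets $\CH_n$ either in $q^n+1$ distinct rational points (a chord) or in one point with multiplicity $q^n+1$ (a tangent). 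As uniformizer at $P_\infty$ I take $t=y/x$, using $v_{P_\infty}(x)=-(q^n+1)$ and $v_{P_\infty}(y)=-q^n$.

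For $a=1$ we have $\sigma(y)=y+b$, $\sigma(x)=x+b^{q^n}y+c$, which has no affine fixed point (solving $\sigma(y)=y$ forces $b=0$, and then $\sigma(x)=x$ forces $c=0$, i.e.\ $\sigma=1$); hence $P_\infty$ is the unique fixed point and ramification there is wild. I would compute $i_{P_\infty}(\sigma)$ directly from $\sigma t-t=(bx-b^{q^n}y^2-cy)\big/\big((x+b^{q^n}y+c)x\big)$: comparing the valuations of the terms gives $v_{P_\infty}(\sigma t-t)=2$ when $b\neq0$ and $=q^n+2$ when $b=0,\,c\neq0$, which are exactly the two asserted values.

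For $a\neq1$ the key observation is that the multiplier of $\sigma$ on the cotangent space at $P_\infty$ is $\lambda=a^{-q^n}$, read off from the leading terms of $\sigma t=(ay+b)\big/(a^{q^n+1}x+ab^{q^n}y+c)$. Since $\gcd(q^n,q^{2n}-1)=1$, $a^{q^n}=1$ forces $a=1$; thus $\lambda\neq1$ and $i_{P_\infty}(\sigma)=v_{P_\infty}\big((\lambda-1)t+O(t^2)\big)=1$ for every $\sigma$ with $a\neq1$. It remains to find the other fixed points. The eigenline for eigenvalue $a$ lies at infinity away from $P_\infty$ (its $y$-coordinate is nonzero), hence off $\CH_n$, so the remaining fixed points come from the eigenvalue-$1$ locus, and the three asserted cases correspond to the three possibilities for it. Recording that $\sigma$ is semisimple with $\ord(\sigma)=\ord(a)$ exactly when $M$ is diagonalizable, and that the only repeated eigenvalue possible for $a\neq1$ is $a^{q^n+1}=1$, I would argue: (i) if $a^{q^n+1}\neq1$ the eigenvalues are distinct, $M$ is diagonalizable, $\ord(\sigma)=\ord(a)\nmid q^n+1$, and the unique affine eigenvalue-$1$ point $[x_0:y_0:1]$ (with $y_0=b/(1-a)$) lies on $\CH_n$, a routine substitution using $c^{q^n}+c=b^{q^n+1}$, giving one further tame fixed point and $i(\sigma)=2$; (ii) if $a^{q^n+1}=1$ with $M$ diagonalizable then $\ord(\sigma)=\ord(a)\mid q^n+1$, the eigenvalue-$1$ eigenspace is a line through $P_\infty$ that is not the tangent $z=0$, hence a chord meeting $\CH_n$ in $q^n+1$ distinct tame fixed points, giving $i(\sigma)=q^n+1$; (iii) if $a^{q^n+1}=1$ with $M$ having a Jordan block then $p\mid\ord(\sigma)$, $P_\infty$ is the only eigenvalue-$1$ point, and $i(\sigma)=1$.

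The part I expect to require the most care is case (iii): there the inertia at $P_\infty$ is genuinely wild, so one is tempted to expect $i_{P_\infty}(\sigma)\geq2$, yet the correct value is $1$. The resolution is that such a $\sigma$ has nontrivial semisimple part of order $\ord(a)>1$ coprime to $p$, so it does not lie in the first ramification group $G_1(P_\infty)$; equivalently, its cotangent multiplier $a^{-q^n}$ is $\neq1$. Getting this bookkeeping right, and correctly matching the diagonalizable/Jordan dichotomy to the divisibility conditions on $\ord(\sigma)$, is the crux; the two explicit local expansions for $a=1$ and the verification that $[x_0:y_0:1]\in\CH_n$ are routine by comparison.
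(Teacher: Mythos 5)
Your proposal is correct, and its overall skeleton matches the paper's: both proofs compute $i(\sigma)$ by counting fixed points in the tame case and by a direct local expansion at $P_\infty$ in the wild case, and your $a=1$ computation (with $t=y/x$, numerator $bx-b^{q^n}y^2-cy$, valuations $-(q^n+1)$ and $-q^n$ for $x$ and $y$) is literally the paper's. Where you genuinely diverge is the case $a\neq 1$. The paper argues that such $\sigma$ lies outside the Sylow $p$-subgroup of $H$ (hence outside the higher ramification group at $P_\infty$), so every fixed point contributes exactly $1$, and then invokes the conjugacy lemma of Garcia--Stichtenoth--Xing (their Lemma 4.1) to replace $\sigma=[a,b,c]$ by the normal form $\sigma^{\ast}=[a,0,0]$, for which the fixed locus on $\CH_n$ is read off instantly: the origin plus $P_\infty$ when $a^{q^n+1}\neq 1$, and the chord $\{y=0\}$ plus $P_\infty$ when $a^{q^n+1}=1$. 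You instead work with $[a,b,c]$ directly via the eigenstructure of the upper-triangular matrix, split on diagonalizability, verify by substitution (using $c^{q^n}+c=b^{q^n+1}$ and the Frobenius identity $(1-a)^{q^n}=1-a^{q^n}$, the check does go through and yields $x_0^{q^n}+x_0=y_0^{q^n+1}$) that the eigenvalue-$1$ affine point lies on the curve, and obtain $i_{P_\infty}(\sigma)=1$ from the cotangent multiplier $a^{-q^n}\neq 1$ rather than from the Sylow-subgroup observation. Your route buys self-containedness --- no appeal to the external conjugacy lemma, and a transparent reason why the wild-looking case $p\mid\ord(\sigma)$ still gives $i_{P_\infty}=1$ --- at the cost of the substitution computation and the bookkeeping matching the diagonalizable/Jordan dichotomy to the divisibility conditions on $\ord(\sigma)$ (which you carry out correctly: distinct eigenvalues force semisimplicity and $\ord(\sigma)=\ord(a)$, while a Jordan block forces $a^{q^n+1}=1$ and $p\mid\ord(\sigma)$). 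The paper's route, by contrast, leans on conjugation-invariance of $i(\sigma)$ to make the fixed-point count trivial once the normal form is available.
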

\begin{proof}
(Case $a \neq 1$)~The Sylow $p$-subgroup of $H$ is the set consisting of $[a,b,c]$ with $a=1$. Therefore, if $\sigma=[a,b,c]$ with $a\neq 1$, then $\sigma$ is not in the higher ramification group of $P_{\infty}$, so it will not fix $P_{\infty}$ to a high order. Since all other places is at most tamely ramified, we have
\[i_{P}(\sigma)=v_{P}(\sigma(t)-t)=
\begin{cases}
0 &, \sigma(P) \neq P, \\
1 &, \sigma(P)=P.
\end{cases}\]
Therefore, in this case we have
\[i(\sigma)=\#\{P\in\CH_n | \deg(P)=1 \text{~and~} \sigma(P)=P\}.\]
If $\ord(\sigma)$ is a multiple of $p$, then $\sigma$ cannot fix any degree one places other than $P_{\infty}$ since those places are tame. Thus $i(\sigma)=1$. Suppose now $\ord(\sigma)$ divides $q^n+1$, then one can show that $\sigma=[a,b,c]$ is conjugate in $H$ to $\sigma^{\ast}=[a,0,0]$ (see \cite[Lemma 4.1]{GSX00}). By \eqref{eqnsigma}, $\sigma^{\ast}$ satisfies $\sigma^{\ast}(x)=x$ and $\sigma^{\ast}(y)=ay$. It is then easy to see that in the affine part of $\CH_n$, $\sigma^{\ast}$ fixes exactly the (affine) line $\{y=0\}$. Hence, $i(\sigma^{\ast})=q^n+1$ as $\#(\CH_n \cap \{y=0\})=q^n$ and $\sigma^{\ast}$ also fixes $P_{\infty}$. Since $i(\sigma)$ is preserved under conjugation, we have $i(\sigma)=i(\sigma^{\ast})=q^n+1$. Finally, if the order of $\sigma$ does not divide $q^n+1$, then again $\sigma=[a,b,c]$ is conjugate in $H$ to $\sigma^{\ast}=[a,0,0]$. This time we have $\sigma^{\ast}(x)=a^{q^n+1}x$ and $\sigma^{\ast}(y)=ay$. So in the affine part of $\CH_n$, $\sigma^{\ast}$ fixes exactly the origin. Thus $i(\sigma)=i(\sigma^{\ast})=2$.

(Case $a = 1$)~If $\sigma=[a,b,c]$ with $a=1$, then $\sigma$ is in the higher ramification group of $P_{\infty}$, and this is the only point that $\sigma$ can fix. In this case, we compute $i(\sigma)$ directly from the definition. First, $i(\sigma)=i_{P_{\infty}}(\sigma)=v_{P_{\infty}}(\sigma(t)-t)$, where $t$ is a local uniformizer at $P_{\infty}$. We choose $t=y/x$ to be the local uniformizer. Then
\begin{align*}
i(\sigma)&=v_{P_{\infty}}(\frac{y+b}{x+b^{q^n}y+c}-\frac{y}{x}) \\
&= v_{P_{\infty}}((y+b)x-y(x+b^{q^n}y+c))-v_{P_{\infty}}(x)-v_{P_{\infty}}(x+b^{q^n}y+c) \\
&= v_{P_{\infty}}(-b^{q^n}y^2+bx-cy)+2(q^n+1) \\
&= \begin{cases}
2 &, \text{~if~} b\neq 0, \\
q^n+2 &, \text{~if~} b=0,c\neq 0.
\end{cases}
\end{align*}
\end{proof}

The $i(\sigma)$ among various kinds of elements are shown in Figure \ref{fig1}. Each number in a box corresponds to a subgroup of that order, and each number on an edge is the $i(\sigma)$ for elements that lie in the upper group but not the lower one.

\begin{figure}[h]
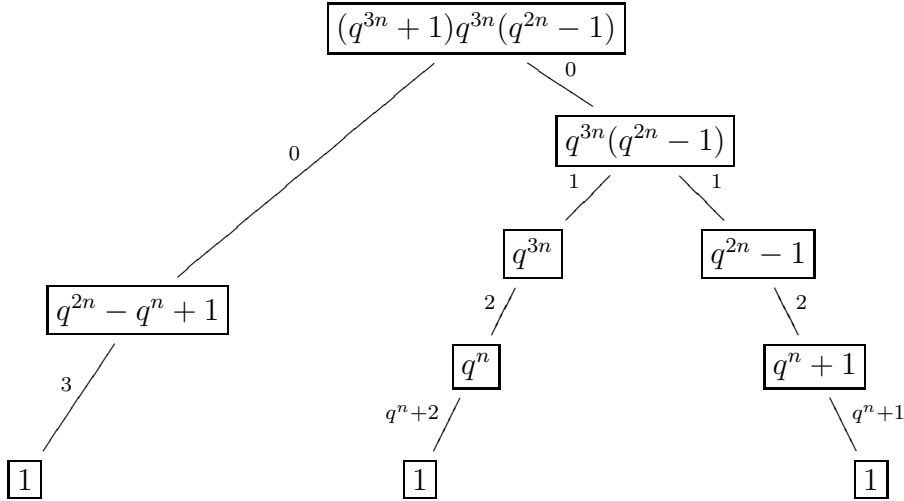

\begin{equation*}
\xygraph{
!{<0cm,0cm>;<1.5cm,0cm>:<0cm,1.5cm>::}
!{(4,0) }*+{\framebox{$(q^{3n}+1)q^{3n}(q^{2n}-1)$}}="a"
!{(5.5,-1) }*+{\framebox{$q^{3n}(q^{2n}-1)$}}="b"
!{(6.5,-2) }*+{\framebox{$q^{2n}-1$}}="c"
!{(7,-3)}*+{\framebox{$q^n+1$}}="d"
!{(7.5,-4)}*+{\framebox{$1$}}="e"
!{(4.5,-2)}*+{\framebox{$q^{3n}$}}="f"
!{(4,-3)}*+{\framebox{$q^n$}}="g"
!{(3.5,-4)}*+{\framebox{$1$}}="h"
!{(1,-2.5)}*+{\framebox{$q^{2n}-q^n+1$}}="i"
!{(0,-4)}*+{\framebox{$1$}}="j"
"a"-^{0}"b" "b"-^{1}"c" "c"-^{2}"d" "d"-^{q^n+1}"e" "b"-_{1}"f" "f"-_{2}"g" "g"-_{q^n+2}"h" "a"-_{0}"i" "i"-_{3}"j"
}
\end{equation*}
\caption{$i(\sigma)$ among various elements in $PGU(3,q^n)$ \label{fig1}}
\end{figure}

The following proposition follows immediately from the above lemma. The significance of the proposition is that either $i(\sigma)$ is very small, or $i(\sigma)$ is very large, and nothing in the middle can happen.
\begin{prop}\label{propis}
If $\sigma\in PGU(3,q^n)$, then $i(\sigma)=0,1,2,3,q^n+1$ or $q^n+2$.
\end{prop}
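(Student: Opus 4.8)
The plan is to treat the proposition as a direct corollary of the preceding lemma, organizing the argument around the trichotomy for the fixed-point behaviour of elements of $PGU(3,q^n)$ recalled at the start of this section. Every $\sigma \in PGU(3,q^n)$ either fixes no point of $\CH_n$, fixes a single point of degree three, or fixes at least one point of degree one, and I would handle these three mutually exclusive cases in turn.

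First, if $\sigma$ fixes no point, then by the convention attached to \eqref{defisigma} each term $i_P(\sigma)$ is zero, so $i(\sigma)=0$. Second, if $\sigma$ fixes only a point $P$ of degree three, then since the order of such an element divides $q^{2n}-q^n+1$ and hence is coprime to $q$, the ramification is tame, giving $i_P(\sigma)=1$; as $\deg P=3$ and no other place contributes, \eqref{defisigma} yields $i(\sigma)=3$.

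For the remaining case, where $\sigma$ fixes a point of degree one, I would use transitivity of $PGU(3,q^n)$ on the degree-one points of $\CH_n$ together with the conjugation-invariance of $i(\sigma)$ to replace $\sigma$ by a conjugate fixing $P_{\infty}$, so that $\sigma \in H$. The preceding lemma then gives $i(\sigma) \in \{1, 2, q^n+1, q^n+2\}$ for every such $\sigma \neq 1$. Collecting the three cases, the only possible values are $0,1,2,3,q^n+1,q^n+2$, as claimed.

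I do not anticipate a genuine obstacle, since all the substantive computation is already contained in the lemma. The only points needing care are checking that the three cases are exhaustive and disjoint, and that the conjugation reduction in the third case is valid; both follow from the description of the action of $PGU(3,q^n)$ on $\CH_n$ and the invariance of $i(\sigma)$ under conjugation cited earlier in the section.
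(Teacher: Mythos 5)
Your proposal is correct and follows essentially the same route as the paper: the paper derives the proposition immediately from the lemma, with the cases $i(\sigma)=0$ (no fixed point) and $i(\sigma)=3$ (degree-three fixed point, tame since the order divides $q^{2n}-q^n+1$) already settled in the discussion preceding the lemma, and the degree-one case reduced to $H$ by transitivity and conjugation-invariance exactly as you do. No gaps; your write-up just makes explicit the case analysis that the paper leaves implicit in the phrase ``follows immediately.''
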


Now we have the contribution of each element in $PGU(3,q^n)$ to the ramification divisor, and we are ready to finish the proof of the theorems stated in the introduction.

\section{Galois subcovers of the Hermitian curve}\label{secpf}

In Proposition \ref{propram}, we write the degree of the ramification divisor $R$ as the sum of $i(\sigma)$ for the $d-1$ nontrivial $\sigma$ in the Galois group $G$, and in Proposition \ref{propis} we found the possible values for each $i(\sigma)$. In particular, the nontrivial elements divide into two groups according to $i(\sigma) = 0,1,2,3$ or $i(\sigma) = q^n+1, q^n+2$. Write $d=1+u+v$ with
\begin{equation*}
u = \# \{ \sigma \neq 1 : i(\sigma) = 0,1,2,3 \} \text{~~and~~} v = \# \{ \sigma \neq 1 : i(\sigma) = q^n+1, q^n+2 \}.
\end{equation*}
Then
\begin{equation} \label{eq:degRuv}
v(q^n+1) \leq \deg R \leq v(q^n+1)+3u+v.
\end{equation}
We compare this with the description of $\deg R$ in Section \ref{secsubH}.
For a subcover $\CY_n$ of the Hermitian curve $\CH_n$, not necessarilly
Galois, let $2g(\CY_n)-2 = A(q^n+1) - B$, with $1 \leq B \leq q^n+1$,
and let $k$ be maximal with $k(A+1) < B.$ As in \eqref{eq:degRAB}, let
\begin{equation} \label{eq:degRAB2}
\deg R = (2g(\CH_n)-2)- d (2g(\CY_n)-2) = R_0 (q^n+1) + R_1,
\end{equation}
where $R_0 = (q^n-2-dA+k)$ and $R_1 = dB-k(q^n+1).$ Clearly,
\begin{equation} \label{eq:R10}
k(R_0-d)+(R_1-d) \geq k(k-3). 
\end{equation}
We will now prove a new lower bound for $d$.

\begin{prop}\label{proplb}
Let $\CY_n$ be a maximal curve with $2g(\CY_n)-2 = A(q^n+1) - B$, for integers $A$ and $B$ with $1 \leq B \leq q^n+1$.
For $B > A+2$ and for $k(A+1) < B$, if $\phi : \CH_n \longrightarrow \CY_n$ is a Galois covering of degree $d$
then $dB \geq (k+1)(q^n+1).$
\end{prop}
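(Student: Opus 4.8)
The plan is to prove the equivalent statement $R_1 \geq q^n+1$, where $R_1 = dB - k(q^n+1)$ as in \eqref{eq:degRAB2}; indeed $dB \geq (k+1)(q^n+1)$ is literally the same as $R_1 \geq q^n+1$. I would work with two descriptions of $\deg R$ at once: the arithmetic one $\deg R = R_0(q^n+1) + R_1$ from \eqref{eq:degRAB2}, valid for any subcover, and the Galois-theoretic one coming from \eqref{eq:degRuv}. Setting $m := \deg R - v(q^n+1)$, the latter reads $\deg R = v(q^n+1) + m$ with $0 \leq m \leq 3u+v$ and $u+v = d-1$.

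The key reduction is to show that $R_0 < v$. Granting this, comparing the two expressions gives $R_1 = m - (R_0-v)(q^n+1) = m + (v-R_0)(q^n+1) \geq q^n+1$, using $m \geq 0$ and $v - R_0 \geq 1$, which is exactly the claim. So the whole proof reduces to the single inequality $R_0 < v$.

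To establish $R_0 < v$ I would argue by contradiction: write $R_0 = v + w$ with $w \geq 0$ and substitute into the rearranged form of \eqref{eq:R10}, namely $kR_0 + R_1 \geq (k+1)d + k(k-3)$. Since $R_1 = m - w(q^n+1)$, this becomes $kv + m - w(q^n+1-k) \geq (k+1)d + k(k-3)$; because $w \geq 0$ and $q^n+1-k > 0$, the $w$-term has favorable sign and may be dropped, leaving $kv + m \geq (k+1)d + k(k-3)$. Feeding in $m \leq 3u+v$ and $u = d-1-v$ and simplifying collapses this to $(2-k)u \geq (k-1)^2$, which is impossible for every $k \geq 2$, since then the left side is $\leq 0$ while the right side is $\geq 1$.

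The main obstacle is the boundary case $k=1$, where the inequality above degenerates to $0 \geq 0$ and yields nothing; this is precisely the point at which the plain bound \eqref{eq:R10} is too weak, and where I expect to use the full strength of the hypothesis $B > A+2$. The exact identity underlying \eqref{eq:R10} is $k(R_0-d)+(R_1-d) = k(k-3) + d\bigl(B - k(A+1) - 1\bigr)$, and for $k=1$ with $B \geq A+3$ its last term equals $d(B-A-2) \geq d$, sharpening the bound to $R_0 + R_1 \geq 3d - 2$. Running the same substitution $R_0 = v+w$, $R_1 = m - w(q^n+1)$ through this sharper inequality, together with $m \leq 3u+v$ and $u = d-1-v$, now forces $0 \geq v + 1 + wq^n$, which is absurd since $v,w \geq 0$. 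Thus $R_0 < v$ in all cases, which completes the argument. The delicate part is recognizing that the hypothesis $B > A+2$ supplies exactly the extra factor of $d$ needed to close the otherwise-critical case $k=1$.
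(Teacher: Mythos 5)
Your proof is correct, and it takes a genuinely different (and leaner) route than the paper's. The paper argues by contradiction on the conclusion itself: assuming $dB < (k+1)(q^n+1)$, it invokes Lemma \ref{lemcov} to get the complementary bound $dB > k(q^n+1)$, so that $R_0$ and $R_1$ are identified as the true quotient and remainder of $\deg R$ upon division by $q^n+1$; it then bounds $R_0 \leq v+1$ and $R_1 \leq 3u+v$ \emph{separately} from \eqref{eq:degRuv}, splitting into the sub-cases $3u+v < q^n+1$ and $3u+v \geq q^n+1$, and plays these against \eqref{eq:R10} and \eqref{eq:k1}, with $k \geq 3$, $k=2$, $k=1$ treated one by one (the sub-case $k=2$, $3u+v \geq q^n+1$ needing a delicate equality analysis). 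You instead contradict the auxiliary statement $R_0 \geq v$: writing $R_0 = v+w$ forces $R_1 = m - w(q^n+1)$, so you retain the exact linkage between $R_0$ and $R_1$ through $\deg R$ rather than bounding them independently; since the coefficient of $w$ in $kR_0+R_1$ is $k-(q^n+1) < 0$ (note $k \leq k(A+1) < B \leq q^n+1$, so $q^n+1-k > 0$ --- a point worth stating explicitly), the unknown $w$ can simply be discarded, and \eqref{eq:R10} collapses to $(2-k)u \geq (k-1)^2$, which disposes of every $k \geq 2$ uniformly; your $k=1$ case uses exactly the paper's sharpening \eqref{eq:k1}, which you correctly derive from $B > A+2$ via the identity $k(R_0-d)+(R_1-d) = k(k-3)+d(B-k(A+1)-1)$. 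The gains are real: Lemma \ref{lemcov} is never needed, there is no quotient/remainder identification, the sub-case split on $3u+v$ versus $q^n+1$ disappears, and $k=2$ is no longer exceptional. One shared, harmless caveat: both your proof and the paper's tacitly assume $k \geq 1$ (for $k=0$ both arguments degenerate), which is fine since under $B > A+2$ the maximal $k$ with $k(A+1) < B$ satisfies $k \geq 1$, and in the paper's applications $k = q \geq 2$.
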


\begin{proof}
Assume to the contrary that $dB < (k+1)(q^n+1)$. Since $B \geq 2k+1$, $3d < 2(q^n+1),$ and thus $3u+v < 2(q^n+1).$
%
With Lemma \ref{lemcov}, $dB > dk(A+1) \geq k(q^n+1)$. Together with the assumption,
\[
k(q^n+1) < dB < (k+1)(q^n+1).
\]
For $\deg R = R_0(q^n+1)+R_1$ in \eqref{eq:degRAB2}, it follows that $R_0$ corresponds to the quotient and $R_1$ to the remainder after
divisor by $q^n+1$. Now we compare with \eqref{eq:degRuv}. Using $R_0 \leq v+1$ and $R_1 \leq 3u+v$,
\[
k(R_0-d)+(R_1-d) \leq k(-u)+2u-1,
\]
which, for $k \geq 3$, contradicts \eqref{eq:R10}. It remains to prove the case ($k=2$) and the case ($k=1, B > A+2$).
Observe that for ($k=1, B > A+2$), \eqref{eq:R10} can be replaced with
\begin{equation} \label{eq:k1}
(R_0-d)+(R_1-d) \geq d-2.
\end{equation}
If $3u+v < q^n+1$ then $R_0 = v$ and $R_1 \leq 3u+v$. For ($k=2$), $2R_0 + R_1 \leq 3u+3v = 3d-3$ contradicts \eqref{eq:R10}.
For ($k=1, B > A+2$), $R_0+R_1 \leq 3u+2v = 2d+u-2$ contradicts \eqref{eq:k1}. If $3u+v \geq q^n+1$ then $R_0 \leq v+1$ and
$R_1 \leq 3u+v-1$. For ($k=2$), $2R_0 + R_1 \leq 3u+3v+1 = 3d-2$. In combination with \eqref{eq:R10} equality holds and
$R_0 = v+1$ and $R_1 = 3u+v-1$. The latter implies $3u+v = q^n+1$, and $R_0 = v+1$ would then imply $R_1 = 0$, a contradiction.
For ($k=1, B > A+2$), $R_0+R_1 \leq 3u+2v = 2d+u-2$ contradicts \eqref{eq:k1}.
\end{proof}

We will apply the above proposition to the generalized GK curve $\CC_n$ and the plane curve $\CX_n$, that are both maximal curves over $\mathbb{F}_{q^{2n}}$. For their genera we have
\begin{align}
2g(\CH_n)-2 &~=~ (q^n-2)(q^n+1), \nonumber \\
2g(\CC_n)-2 &~=~ (q^2-1)(q^n+1)-(q^3+1), \label{eq:gCn} \\
2g(\CX_n)-2 &~=~ (q-1)(q^n+1)-(q^2+1). \label{eq:gXn}
\end{align}

We first consider the generalized GK curve $\CC_n$. Suppose now that $\phi:\CH_n\longrightarrow\CC_n$ is a Galois covering of degree $d$. From \eqref{eq:gCn}, we have $A=q^2-1$, $B=q^3+1$ and $k=q$.
Proposition \ref{proplb} gives the lower bound for $d$ as
\begin{equation*}
d \geq \frac{(k+1)(q^n+1)}{B} = \frac{q^n+1}{q^2-q+1}.
\end{equation*}
From \eqref{eq:bounds} we have the upper bound, for $n \geq 3$,
\begin{equation*}
d \leq \frac{2g(\CH_n)-2}{2g(\CC_n)-2} \leq \frac{q^n-2}{q^2-2}.
\end{equation*}
For $q \geq 3$ and $n \geq 3$ the lower bound exceeds the upper bound and no solutions for $d$ exist. Hence the GK curve cannot be a Galois subcover of the Hermitian curve. This is Theorem \ref{mainthm}.

For $q=2$, $2g(\CH_n)-2 = (2^n+1)/3 \cdot (2g(\CX_n)-2)$ and the inequalities admit the unique solution $d = (2^n+1)/3$.
This gives the degree in Theorem \ref{mainthmq=2}. Moreover, Proposition \ref{propram} reveals that such a covering, if it exists, has to be unramified. This proves Theorem \ref{mainthmq=2}.

\begin{remark}
In the proof we did not use the fact that we are dealing with the generalized GK-curve $\CC_n$. What we use is only the genus of $\CC_n$ given by \eqref{eq:gCn}. Thus we actually prove that there are no curves with genus $\frac{1}{2}(q-1)(q^{n+1}+q^n-q^2)$ being a Galois subcover of the Hermitian curve $\CH_n$ when $q\geq 3$ and odd $n \geq 3$.
\end{remark}

We now turn our attention to $\CX_n$. Suppose that $\phi:\CH_n\longrightarrow\CX_n$ is a Galois covering of degree $d$. From \eqref{eq:gXn}, we have $A=q-1$, $B=q^2+1$ and $k=q$. Proposition \ref{proplb} and \eqref{eq:bounds} gives the lower and upper bounds for $d$ as
\begin{equation*}
\frac{(q+1)(q^n+1)}{q^2+1} \leq d \leq q^{n-1}+q^{n-2}+\ldots+q^2+q+2.
\end{equation*}
This proves Theorem \ref{thmXn}.

\begin{remark}
In most cases we expect that one need to corporate our ideas with other methods to completely remove the possibility of a curve being a Galois subcover of the Hermitian curve. For a concrete example, consider $n=3$. Then $\CX_3$ is given by the equation $y^{q^2}-y=z^{q^2-q+1}$ and has genus $\frac{1}{2}(q-1)(q^3-q)$. Our bounds for the degree $d$ gives $q^2+q \leq d \leq q^2+q+2$. We can eliminate the cases $d=q^2+q+1$ and $d=q^2+q+2$ by some elementary arguments. However, the case $d=q^2+q$ is more subtle. It turns out that there is a curve
\[ \CY: y^{q^2}-y^q+y=x^{q^2-q+1} \]
which has the same genus as $\CX_3$, and is a Galois subcover of the Hermitian curve of degree $q^2+q$, with
\[\begin{array}{c|*6{@{\hspace{6mm}}c}}
i(\sigma) & 0 & 1 & 2 & 3 & q^3+1 & q^3+2 \\
\hline
\# \sigma & 1 & q^2-q & 0 & 0 & q & q-1 \\
\end{array}\]
It can be proved by the same idea as the case $q=3$ in \cite{GaSt06} that $\CY$ is not isomorphic to $\CX_3$, but that does not settle the case $d=q^2+q$ completely.
\end{remark}

\subsection*{Acknowledgments}
We would like to express our gratitude to Professor Mike Zieve and Professor Rachel Pries for pointing out a mistake in an earlier version of the preprint.


\begin{thebibliography}{10}

\bibitem{ABQ09}
Miriam Abd{\'o}n, Juscelino Bezerra, and Luciane Quoos, \emph{Further examples
  of maximal curves}, J. Pure Appl. Algebra \textbf{213} (2009), no.~6,
  1192--1196.

\bibitem{AbGa04}
Miriam Abd{\'o}n and Arnaldo Garcia, \emph{On a characterization of certain
  maximal curves}, Finite Fields Appl. \textbf{10} (2004), no.~2, 133--158.

\bibitem{AbTo99}
Miriam Abd{\'o}n and Fernando Torres, \emph{On maximal curves in characteristic
  two}, Manuscripta Math. \textbf{99} (1999), no.~1, 39--53.

\bibitem{CKT99}
Antonio Cossidente, Gabor Korchm{\'a}ros, and Fernando Torres, \emph{On curves
  covered by the {H}ermitian curve}, J. Algebra \textbf{216} (1999), no.~1,
  56--76.

\bibitem{CKT00}
\bysame, \emph{Curves of large genus covered by the {H}ermitian curve}, Comm.
  Algebra \textbf{28} (2000), no.~10, 4707--4728.

\bibitem{Duu10}
Iwan~M. Duursma, \emph{Two-point coordinate rings for {GK}-curves}, IEEE Trans. Inform. Theory \textbf{57} (2011), no.~2, 593--600.

\bibitem{FGT97}
Rainer Fuhrmann, Arnaldo Garcia, and Fernando Torres, \emph{On maximal curves},
  J. Number Theory \textbf{67} (1997), no.~1, 29--51.

\bibitem{FuTo96}
Rainer Fuhrmann and Fernando Torres, \emph{The genus of curves over finite
  fields with many rational points}, Manuscripta Math. \textbf{89} (1996),
  no.~1, 103--106.

\bibitem{Gar01}
Arnaldo Garcia, \emph{Curves over finite fields attaining the {H}asse-{W}eil
  upper bound}, European {C}ongress of {M}athematics, {V}ol. {II} ({B}arcelona,
  2000), Progr. Math., vol. 202, Birkh\"auser, Basel, 2001, pp.~199--205.

\bibitem{GGS10}
Arnaldo Garcia, Cem G{\"u}neri, and Henning Stichtenoth, \emph{A generalization
  of the {G}iulietti-{K}orchm\'aros maximal curve}, Adv. Geom. \textbf{10}
  (2010), no.~3, 427--434.

\bibitem{GaSt06}
Arnaldo Garcia and Henning Stichtenoth, \emph{A maximal curve which is not a
  {G}alois subcover of the {H}ermitian curve}, Bull. Braz. Math. Soc. (N.S.)
  \textbf{37} (2006), no.~1, 139--152.

\bibitem{GSX00}
Arnaldo Garcia, Henning Stichtenoth, and Chao-Ping Xing, \emph{On subfields of
  the {H}ermitian function field}, Compositio Math. \textbf{120} (2000), no.~2,
  137--170.

\bibitem{GiKo09}
Massimo Giulietti and G{\'a}bor Korchm{\'a}ros, \emph{A new family of maximal
  curves over a finite field}, Math. Ann. \textbf{343} (2009), no.~1, 229--245.

\bibitem{HuPi73}
Daniel~R. Hughes and Fred~C. Piper, \emph{Projective planes}, Springer-Verlag,
  New York, 1973, Graduate Texts in Mathematics, Vol. 6.

\bibitem{Iha81}
Yasutaka Ihara, \emph{Some remarks on the number of rational points of
  algebraic curves over finite fields}, J. Fac. Sci. Univ. Tokyo Sect. IA Math.
  \textbf{28} (1981), no.~3, 721--724.

\bibitem{KoTo01}
G{\'a}bor Korchm{\'a}ros and Fernando Torres, \emph{Embedding of a maximal
  curve in a {H}ermitian variety}, Compositio Math. \textbf{128} (2001), no.~1,
  95--113.

\bibitem{KoTo02}
\bysame, \emph{On the genus of a maximal curve}, Math. Ann. \textbf{323}
  (2002), no.~3, 589--608.

\bibitem{Lac87}
Gilles Lachaud, \emph{Sommes d'{E}isenstein et nombre de points de certaines
  courbes alg\'ebriques sur les corps finis}, C. R. Acad. Sci. Paris S\'er. I
  Math. \textbf{305} (1987), no.~16, 729--732.

\bibitem{LaSe74}
Vicente Landazuri and Gary~M. Seitz, \emph{On the minimal degrees of projective
  representations of the finite {C}hevalley groups}, J. Algebra \textbf{32}
  (1974), 418--443.

\bibitem{Leo96}
Heinrich-Wolfgang Leopoldt, \emph{\"{U}ber die {A}utomorphismengruppe des
  {F}ermatk\"orpers}, J. Number Theory \textbf{56} (1996), no.~2, 256--282.

\bibitem{RuSt94}
Hans-Georg R{\"u}ck and Henning Stichtenoth, \emph{A characterization of
  {H}ermitian function fields over finite fields}, J. Reine Angew. Math.
  \textbf{457} (1994), 185--188.

\bibitem{Ser77}
Jean-Pierre Serre, \emph{Linear representations of finite groups},
  Springer-Verlag, New York, 1977, Translated from the second French edition by
  Leonard L. Scott, Graduate Texts in Mathematics, Vol. 42.

\bibitem{Ser79}
\bysame, \emph{Local fields}, Graduate Texts in Mathematics, vol.~67,
  Springer-Verlag, New York, 1979, Translated from the French by Marvin Jay
  Greenberg.

\bibitem{Sti73}
Henning Stichtenoth, \emph{\"{U}ber die {A}utomorphismengruppe eines
  algebraischen {F}unktionenk\"orpers von {P}rimzahlcharakteristik. {I, II}},
  Arch. Math. (Basel) \textbf{24} (1973), 527--544, 615--631.

\bibitem{Sti09}
\bysame, \emph{Algebraic function fields and codes}, second ed., Graduate Texts
  in Mathematics, vol. 254, Springer-Verlag, Berlin, 2009.

\bibitem{StXi95}
Henning Stichtenoth and Chao~Ping Xing, \emph{The genus of maximal function
  fields over finite fields}, Manuscripta Math. \textbf{86} (1995), no.~2,
  217--224.

\end{thebibliography}

\end{document}